\theoremstyle{plain}
\newtheorem{theorem}{Theorem}[section]
\newtheorem{corollary}[theorem]{Corollary}
\newtheorem{proposition}[theorem]{Proposition}
\newtheorem{lemma}[theorem]{Lemma}
\theoremstyle{definition}
\theoremstyle{remark}
\newtheorem{remark}[theorem]{Remark}
\numberwithin{equation}{section}\theoremstyle{plain}
\newcommand{\I}{\mathcal{I}}
\renewcommand{\1}{\textbf{1}}
\newcommand{\A}{{\mathcal A}}
\newcommand{\B}{{\mathcal B}}
\newcommand{\C}{{\mathcal C}}
\newcommand{\D}{{\mathcal D}}
\newcommand{\cd}{\mathrm{cd}}
\newcommand\Irr{\operatorname{Irr}}
\newcommand\FPdim{\operatorname{FPdim}}
\newcommand\vect{\operatorname{Vec}}
\newcommand\rk{\operatorname{rk}}
\begin{document}
\title{Slightly trivial extensions of a fusion category}
\author{Jingcheng Dong}
\email{jcdong@nuist.edu.cn}
\address{College of Mathematics and Statistics, Nanjing University of Information Science and Technology, Nanjing 210044, China}

\keywords{Fusion category; Extension; Ising category}

\subjclass[2010]{18D10}

\date{\today}

\begin{abstract}
We introduce and study the notion of slightly trivial extensions of a fusion category which can be viewed as the first level of complexity of extensions. We also provide two examples of slightly trivial extensions which arise from rank $3$ fusion categories.
\end{abstract}

\maketitle

\section{Introduction}\label{sec1}
Let $G$ be a finite group and $e$ be the identity of $G$. A fusion category $\C$ is $G$-graded if there is a decomposition $\C=\oplus_{g\in G}\C_g$ of $\C$ into a direct sum of full abelian subcategories, such that $(\C_g)^{*}=\C_{g^{-1}}$ and $\C_g\otimes \C_h\subseteq \C_{gh}$ for all $g,h \in G$.

The grading $\C=\oplus_{g\in G}\C_g$ is called faithful if $\C_g\neq 0$ for all $g\in G$. A fusion category $\C$ is called a $G$-extension of $\D$ if there is a faithful grading $\C=\oplus_{g\in G}\C_g$, such that the trivial component $\C_e$ is equivalent to $\D$. If $G$ is trivial then $\C$ is equivalent to $\D$. In this case, we call $\C$ is a trivial extension of $\D$.

In this paper, we introduce the new notion of slightly trivial extensions of a fusion category, which can be viewed as the first level of complexity of extensions, except the trivial extension. Let $\C=\oplus_{g\in G}\C_g$ be a $G$-extension of a fusion category $\D$. We obtain a necessary and sufficient condition for $\C$ being a slightly trivial extension of $\D$. In particular, if the largest pointed fusion subcategory $\D_{pt}$ is trivial and $\C$ is a slightly trivial extension of $\D$, then $\C=\C_{pt}\bullet \D$ admits an exact factorization. When $\C$ is a slightly trivial extension of $\D$ and the Grothendieck ring of $\C$ is commutative, we also obtain the fusion rules of $\C$  in terms of that of $\D$. In the last section of this paper, we provide two examples of slightly trivial extensions which arise from rank $3$ fusion categories.

\section{Preliminaries}\label{sec2}
\subsection{Frobenius-Perron dimensions}
Let $\C$ be a fusion category. We shall denote by $\Irr(\C)$ the set of isomorphism classes of simple objects in $\C$. The set $\Irr(\C)$ is a basis of the Grothendieck ring of $\C$. The cardinal number of $\Irr(\C)$ is called the rank of $\C$ and is denoted by $\rk(\C)$.

Let $\C$ be a fusion category and let $X$ be a simple object of $\C$. The Frobenius-Perron dimension $\FPdim(X)$ of $X$ is defined as the Frobenius-Perron eigenvalue of
the matrix of left multiplication by the class of $X$ in the basis $\Irr(\C)$. The Frobenius-Perron dimension of $\C$ is $\FPdim (\C)=\oplus_{X \in \Irr(\C)} (\FPdim X)^2$. The following lemma is due to M\"{u}ger, see \cite[Remark 8.4]{etingof2005fusion}.
\begin{lemma}\label{dim}
Let $X$ be a simple object of a fusion category. Then $\FPdim(X)\geq 1$. In particular, if $\FPdim(X)<2$ then $\FPdim(X)=2\cos(\frac{\pi}{n})$ for some integer $n\geq 3$.
\end{lemma}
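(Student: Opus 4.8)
The plan is to prove the two assertions separately. Throughout I use that $\FPdim$ is a ring homomorphism from the Grothendieck ring of $\C$ to $\mathbb{R}$ taking strictly positive values on $\Irr(\C)$, and that, by the Perron--Frobenius theorem applied to the nonnegative integer fusion matrix $N_X$ (the matrix of left multiplication by $[X]$ in the basis $\Irr(\C)$), the number $d:=\FPdim(X)$ is precisely the spectral radius of $N_X$.

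For the lower bound I would first record that $\FPdim(X)=\FPdim(X^*)$: Frobenius reciprocity gives $N_{XY}^Z=N_{X^*Z}^Y$, so $N_{X^*}=N_X^{T}$, and a matrix and its transpose share the same characteristic polynomial and hence the same Perron--Frobenius eigenvalue. Since $\C$ is rigid, the coevaluation $\1\to X\otimes X^*$ is a nonzero morphism, so $\1$ is a (direct) summand of $X\otimes X^*$. Applying the multiplicative, additive, positive functional $\FPdim$ then gives $d^2=\FPdim(X)\FPdim(X^*)=\FPdim(X\otimes X^*)\geq\FPdim(\1)=1$, whence $d\geq 1$.

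For the second assertion the key is to realise $d$ as the norm (largest eigenvalue) of a genuine \emph{symmetric} nonnegative integer matrix, so that the classification of graphs of small norm becomes available. I would pass to the bipartite ``fusion graph'' of $X$, with adjacency matrix $B=\begin{pmatrix}0 & N_X\\ N_X^{T} & 0\end{pmatrix}$. Its nonzero eigenvalues are $\pm$ the singular values of $N_X$, so its norm is $\sigma_{\max}(N_X)$; and since $N_X^{T}N_X=N_{X^*}N_X=N_{X^*\otimes X}$ is again a nonnegative integer matrix with Perron--Frobenius eigenvalue $\FPdim(X^*\otimes X)=d^2$, one gets $\sigma_{\max}(N_X)^2=d^2$, i.e. the norm of $B$ equals $d$. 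Thus, restricting to the connected component $\Gamma$ of the finite graph $B$ on which this norm is attained, $d<2$ is the norm of a connected finite graph.

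The decisive step is then the classical classification (J.\,H.\ Smith; see also Goodman--de la Harpe--Jones) of connected graphs of norm $<2$: these are exactly the simply-laced Dynkin diagrams $A_n,D_n,E_6,E_7,E_8$. In particular a double edge or a cycle, whose norm is already $\geq 2$, cannot occur, so $\Gamma$ is one of these trees and its norm is $2\cos(\pi/h)$ with $h$ the corresponding Coxeter number; hence $d=2\cos(\pi/n)$ with $n=h$ an integer, and the bound $d\geq 1$ forces $\cos(\pi/n)\geq \tfrac12$, i.e. $n\geq 3$. I expect this last step to be the main obstacle, since it is exactly here that one must go beyond soft Galois-theoretic input: Kronecker's theorem applied to the totally real algebraic integer $d$ (all of whose conjugates are eigenvalues of the symmetric matrix $B$, hence lie in $(-2,2)$) only yields $d=2\cos(\pi q)$ for some rational $q$, and this is genuinely too weak, as values such as $2\cos(2\pi/7)$ satisfy every Galois constraint yet fail to have the required form. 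Excluding them uses the combinatorial nonnegativity encoded in Smith's theorem.
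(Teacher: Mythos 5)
Your argument is correct, but there is no in-paper proof to compare it against: the paper does not prove this lemma, it quotes it, attributing it to M\"uger, with a citation to \cite[Remark 8.4]{etingof2005fusion}. What you wrote is essentially the standard argument underlying that citation. The first half (rigidity makes $\1$ a summand of $X\otimes X^*$, so $d^2=\FPdim(X\otimes X^*)\geq \FPdim(\1)=1$ by positivity and multiplicativity of $\FPdim$, together with $\FPdim(X^*)=\FPdim(X)$ via $N_{X^*}=N_X^{T}$) is the usual one, and your reduction of the second half to the Smith/Goodman--de la Harpe--Jones classification of connected graphs of norm $<2$ is done carefully: the identification of the norm of the bipartite matrix $B$ with $d$ through $N_X^{T}N_X=N_{X^*\otimes X}$, and the exclusion of multiple edges and cycles before invoking the ADE list, are precisely the points most often glossed over.

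One substantive correction to your closing remark, though it does not affect the validity of your proof: it is not true that Kronecker's theorem is genuinely too weak here, nor that the combinatorial classification is unavoidable. Your own setup yields more than total reality of $d$: every Galois conjugate of $d$ is a root of the characteristic polynomial of the symmetric integer matrix $B$, hence an eigenvalue of $B$, and $d$ is the spectral radius of $B$, so $d\geq|d'|$ for every conjugate $d'$. This Perron--Frobenius dominance already excludes $2\cos(2\pi/7)$: its conjugate $2\cos(6\pi/7)=-2\cos(\pi/7)$ has absolute value $2\cos(\pi/7)>2\cos(2\pi/7)$. In general, writing $d=2\cos(2\pi k/m)$ with $\gcd(k,m)=1$, dominance over the conjugate $2\cos(2\pi/m)$ forces $k\equiv\pm1\pmod{m}$, and for odd $m\geq5$ dominance fails outright because $(m-1)/2$ is coprime to $m$ and $|2\cos(\pi(m-1)/m)|=2\cos(\pi/m)>2\cos(2\pi/m)$; hence $m=2n$, $d=2\cos(\pi/n)$, and $d\geq1$ gives $n\geq3$. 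So Kronecker plus dominance suffices, and Smith's theorem is a legitimate alternative rather than a necessity; either route completes the proof.
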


A simple object $X$ of a fusion category is invertible if $X\otimes X^*=\1$, where $X^*$ is the dual of $X$ and $\1$ is the trivial simple object of $\C$. It is obvious that a simple object $X$ is invertible if and only if $\FPdim(X)=1$. A fusion category is pointed if all of its simple objects are invertible. Let $\C$ be a fusion category. We shall denote by $\C_{pt}$ the largest pointed fusion subcategory of $\C$, and by $G(\C)$ the group of isomorphism classes of invertible objects of $\C$. It is clear that $G(\C)$ generates $\C_{pt}$ as a fusion subcategory.

The class of pointed fusion categories has been classified (see e. g. \cite{Ostrik2003}). Let $G$ be a finite group, $\omega$ be a cohomology class in $H^3(G,k^*)$, and $\vect_G^{\omega}$ be the category of finite dimensional vector spaces graded by $G$ with associativity determined by $\omega$. It is shown that a pointed fusion category is equivalent to some $\vect_G^{\omega}$.

%\subsection{Group extensions of a fusion category}
%Let $\C=\oplus_{g\in G}\C_g$ be a $G$-extension of $\D$. Then the Frobenius-Perron dimensions of $\C_g$'s are all equal and we have $\FPdim(\C)=|G|\FPdim(\D)$ \cite[Proposition 8.20]{etingof2005fusion}.

%It is known that every fusion category $\C$ has a canonical faithful grading $\C=\oplus_{g\in \mathcal{U}(\C)}\C_g$ with trivial component $\C_e=\C_{ad}$, where $\C_{ad}$ is the adjoint subcategory of $\C$ generated by simple objects in $X\otimes X^*$ for all $X\in \Irr(\C)$. This grading is called the universal grading of $\C$, and $\mathcal{U}(\C)$ is called the universal grading group of $\C$.

\subsection{Exact factorizations of fusion categories}
Let $\C$ be a fusion category, and let $\A, \B$ be fusion subcategories of $\C$. Let $\A\B$ be the full abelian (not necessarily tensor) subcategory of $\C$ spanned by direct summands in $X\otimes Y$, where $X\in \A$ and $Y\in \B$. We say that $\C$ factorizes into a product of $\A$ and $\B$ if $\C=\A\B$. A factorization $\C=\A\B$ of $\C$ is called exact if $A\cap \B=\vect$, and is denoted by $\C=\A\bullet\B$, see \cite{gelaki2017exact}.

By \cite[Theorem 3.8]{gelaki2017exact}, $\C=\A\bullet\B$ is an exact factorization if and only every simple object of $\C$ can be uniquely expressed in the form $X\otimes Y$, where $X\in \Irr(\A)$ and $\Irr(\B)$.

\section{Slightly trivial extensions}\label{sec3}

Let $\D$ be a fusion category with $\Irr(\D)=\{\1=X_0,X_1,\cdots,X_n\}$. Let $\C=\oplus_{g\in G}\C_g$ be an extension of $\D$. We say that the component $\C_g$ is \emph{similar} to $\D$ if there exists an invertible simple object $\delta_g\in \C_g$ such that $\Irr(\C_g)=\{\delta_g=\delta_g\otimes \1,\delta_g\otimes X_1,\cdots,\delta_g\otimes X_n\}$. If all components are similar to $\D$ then we say that $\C$ is a \emph{slightly trivial} extension of $\D$.

\begin{proposition}\label{invertibles}
Let $\C=\oplus_{g\in G}\C_g$ be an extension of a fusion category $\D$. Then the component $\C_g$ is similar to $\D$ if and only if $\C_g$ contains an invertible simple object. In particular, $\C$ is a slightly trivial extension of $\D$ if and only if every component contains an invertible simple object.
\end{proposition}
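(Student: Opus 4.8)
The plan is to establish the first biconditional by proving its two implications separately; the ``in particular'' clause will then follow at once from the definition of a slightly trivial extension. The forward implication is immediate from the definitions: if $\C_g$ is similar to $\D$, then the object $\delta_g$ appearing in the defining condition is an invertible simple object lying in $\C_g$, so there is nothing to prove. All of the content is in the converse.

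For the converse, suppose $\C_g$ contains an invertible simple object $\delta_g$. The key observation is that tensoring with an invertible object is an autoequivalence of $\C$ as an abelian category. Concretely, I would consider the functor $F=\delta_g\otimes(-)\colon\C\to\C$ together with $F'=\delta_g^{*}\otimes(-)$; since $\delta_g$ is invertible we have $\delta_g^{*}\otimes\delta_g\cong\1\cong\delta_g\otimes\delta_g^{*}$, and the associativity and unit constraints supply natural isomorphisms $F'F\cong\id_{\C}\cong FF'$. Hence $F$ is an equivalence of abelian categories, so it sends simple objects to simple objects and induces a bijection on isomorphism classes of simple objects.

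Next I would check that $F$ carries the trivial component onto $\C_g$. Because $\delta_g\in\C_g$ and the grading satisfies $\C_g\otimes\C_e\subseteq\C_g$, the functor $F$ restricts to $\C_e\to\C_g$; dually $\delta_g^{*}\in\C_{g^{-1}}$ (using $(\C_g)^{*}=\C_{g^{-1}}$) shows $F'$ restricts to $\C_g\to\C_e$, and these restrictions are mutually inverse. Thus $F$ restricts to an equivalence $\C_e\xrightarrow{\sim}\C_g$. Since $\C$ is an extension of $\D$ we may identify $\Irr(\C_e)=\Irr(\D)=\{X_0,\dots,X_n\}$, and transporting this set along the bijection induced by $F$ gives
\[
\Irr(\C_g)=\{\delta_g\otimes X_0,\ \delta_g\otimes X_1,\ \dots,\ \delta_g\otimes X_n\},
\]
which is precisely the statement that $\C_g$ is similar to $\D$.

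Finally, the ``in particular'' assertion is immediate: by definition $\C$ is a slightly trivial extension exactly when every component $\C_g$ is similar to $\D$, and by the biconditional just proved this holds exactly when every component contains an invertible simple object. I do not expect a serious obstacle here; the only point requiring a little care is verifying that $F$ genuinely restricts to an equivalence between the graded pieces $\C_e$ and $\C_g$, rather than merely being an autoequivalence of the ambient category, and this is exactly where the grading axioms $\C_g\otimes\C_h\subseteq\C_{gh}$ and $(\C_g)^{*}=\C_{g^{-1}}$ enter.
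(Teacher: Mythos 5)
Your proof is correct, but it takes a genuinely different route from the paper's. The paper argues by a dimension count: it observes that $\delta_g,\delta_g\otimes X_1,\dots,\delta_g\otimes X_n$ are pairwise nonisomorphic simple objects of $\C_g$, notes that $\FPdim(\delta_g\otimes X_i)=\FPdim(X_i)$, and then invokes the equality $\FPdim(\C_g)=\FPdim(\D)$ (a nontrivial fact about faithful gradings, \cite[Proposition 8.20]{etingof2005fusion}, recorded later in the paper as Lemma \ref{FPdim}) to conclude that these objects exhaust $\Irr(\C_g)$: any further simple object would force $\FPdim(\C_g)>\FPdim(\D)$. You instead avoid Frobenius--Perron dimensions entirely, showing that $F=\delta_g\otimes(-)$ and $F'=\delta_g^{*}\otimes(-)$ restrict, via the grading axioms $\C_g\otimes\C_h\subseteq\C_{gh}$ and $(\C_g)^{*}=\C_{g^{-1}}$, to mutually inverse equivalences between $\C_e$ and $\C_g$, so the induced bijection on isomorphism classes of simples gives $\Irr(\C_g)=\{\delta_g\otimes X_0,\dots,\delta_g\otimes X_n\}$ directly. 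Your argument is more self-contained, since it does not depend on the ENO result on equality of component dimensions, and it makes explicit something the paper leaves implicit (why the $\delta_g\otimes X_i$ are simple and pairwise nonisomorphic); it also yields the abelian equivalence $\C_e\simeq\C_g$ as a byproduct, which is essentially the content of Remark \ref{rmk1}(2). The paper's count is shorter once the dimension machinery is granted, and that machinery is needed elsewhere in the paper anyway.
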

\begin{proof}
Assume that $\C_g$ contains an invertible simple object $\delta_g$. Let $\Irr(\D)=\{\1,X_1,\cdots,X_n\}$. Then $\delta_g,\delta_g\otimes X_1,\cdots,\delta_g\otimes X_n$ are nonisomorphic simple objects in $\C_g$. Since $\FPdim(\delta_g)=\FPdim(\1)$,  $\FPdim(\delta_g\otimes X_i)=\FPdim(X_i)$ and $\FPdim(\D)=\FPdim(\C_g)$, we conclude that $\Irr(\C_g)=\{\delta_g,\delta_g\otimes X_1,\cdots,\delta_g\otimes X_n\}$. Hence $\C_g$ is similar to $\D$.

The other direction is obvious, by the definition of a slightly trivial extension.
\end{proof}

Let $1 = d_0 < d_1< \cdots < d_s$ be positive real numbers, and let $n_0,n_1,\cdots,n_s$ be positive integers. A fusion category is said of type $(d_0,n_0; d_1,n_1;\cdots;d_s,n_s)$ if $n_i$ is the number of the non-isomorphic simple objects of Frobenius-Perron dimension $d_i$, for all $0\leq i\leq s$.

\begin{remark}\label{rmk1}
Let $\C=\oplus_{g\in G}\C_g$ be a slightly trivial extension of a fusion category $\D$.

(1)\ Every component $\C_g$ contains an invertible simple object $\delta_g$. Set $\Irr(\D)=\{\1,X_1,\cdots,X_n\}$ and $\Delta_G=\{\delta_g\in \C_g|g\in G\}$, where $\delta_e=\1$. The proof of Proposition \ref{invertibles} shows that $\Irr(\C)=\{\delta_g,\delta_g\otimes X_1,\cdots,\delta_g\otimes X_n|\delta_g\in \Delta_G\}$.

(2)\ The proof of Proposition \ref{invertibles} also shows that $\C_g$ and $\C_h$ have the same type, for all $g,h\in G$.
\end{remark}

\begin{corollary}\label{exact1}
Let $\C=\oplus_{g\in G}\C_g$ be a slightly trivial extension of a fusion category $\D$. Assume that $\D_{pt}$ is trivial. Then $\C=\C_{pt}\bullet \D$ is an exact factorization of $\C_{pt}$ and $\D$.
\end{corollary}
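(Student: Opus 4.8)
The plan is to first make the pointed part $\C_{pt}$ completely explicit. By Remark \ref{rmk1} I may write $\Irr(\C)=\{\delta_g\otimes X_i : \delta_g\in\Delta_G,\ 0\le i\le n\}$ with $X_0=\1$. Since $\FPdim(\delta_g\otimes X_i)=\FPdim(X_i)$, such a simple object is invertible exactly when $\FPdim(X_i)=1$, that is, when $X_i$ is an invertible object of $\D$. Here is where the hypothesis enters: because $\D_{pt}$ is trivial, the only invertible object of $\D$ is $\1$, so $X_i=\1$ and the invertible simple objects of $\C$ are precisely the elements of $\Delta_G$. Moreover $\Delta_G$ is closed under tensor product (a product of invertibles in $\C_g$ and $\C_h$ is an invertible object of $\C_{gh}$, hence isomorphic to $\delta_{gh}$), so it is a subgroup and $\Irr(\C_{pt})=\Delta_G$.

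Next I would verify the factorization $\C=\C_{pt}\D$. Since each $\delta_g$ is invertible, every object $\delta_g\otimes X_i$ is simple, and so it is the unique simple summand of the tensor product of $\delta_g\in\Irr(\C_{pt})$ with $X_i\in\Irr(\D)$. Thus every element of $\Irr(\C)$ already lies in $\C_{pt}\D$, which gives $\C=\C_{pt}\D$.

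Finally, to establish exactness I would invoke \cite[Theorem 3.8]{gelaki2017exact}, which reduces the statement to showing that every simple object of $\C$ is \emph{uniquely} of the form $\delta_g\otimes X_i$. Existence is exactly Remark \ref{rmk1}. For uniqueness, if $\delta_g\otimes X_i\cong\delta_h\otimes X_j$, then both sides are isomorphic and hence lie in a single graded component, which forces $g=h$; tensoring with $\delta_g^{*}$ to cancel the invertible object then yields $X_i\cong X_j$, so $i=j$. Therefore $\C=\C_{pt}\bullet\D$ is an exact factorization. (Equivalently, one checks directly that $\C_{pt}\cap\D=\vect$, since the only $\delta_g$ lying in the trivial component $\C_e=\D$ is $\delta_e=\1$.) The one step that genuinely carries the weight of the hypothesis is the identification $\Irr(\C_{pt})=\Delta_G$ in the first paragraph: without the assumption $\D_{pt}=\vect$, the category $\C$ could acquire invertible objects outside $\Delta_G$, and the clean factorization would break down.
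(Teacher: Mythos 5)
Your proof is correct and follows essentially the same route as the paper: use the triviality of $\D_{pt}$ to identify the invertible objects of $\C$ with $\Delta_G$ (so $\Irr(\C_{pt})=\Delta_G$), express every simple object as $\delta_g\otimes X_i$ via Remark \ref{rmk1}, and conclude by \cite[Theorem 3.8]{gelaki2017exact}. You are in fact somewhat more careful than the paper, which leaves the uniqueness of the expression $\delta_g\otimes X_i$ (required by that theorem) implicit, whereas you verify it directly using the grading and cancellation by $\delta_g^{*}$.
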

\begin{proof}
Since $\D_{pt}$ is trivial, the proof of Theorem \ref{invertibles} shows that every component $\C_g$ exactly contains only one invertible simple object. Let $\delta_g$ be the invertible simple object in $\C_g$. Then $\Delta_G=\{\delta_g|g\in G\}=G(\C)$. Hence every simple object of $\C$ can be expressed in the form $X\otimes Y$, where $X\in \C_{pt}$ and $Y\in \D$ are simple objects, by Remark \ref{rmk1}. The result then follows from \cite[Theorem 3.8]{gelaki2017exact}.
\end{proof}

%In the next two corollaries, we consider fusion categories with commutative Grothendieck ring.
%
%\begin{corollary}\label{necess_suffic}
%Let $\C=\B\bullet \D$ be an exact factorization of a pointed fusion category $\B$ and a fusion category $\D$.  Assume that the Grothendieck ring of $\C$ is commutative then $\C$ is a slightly trivial extension of $\D$.
%\end{corollary}
%\begin{proof}
%Since $\C=\B\bullet \D$ is an exact factorization, every simple object in $\C$ admits the form $X\otimes Y$, where $X$ and $Y$ are simple objects in $\B$ and $\D$, respectively.
%
%Let $G=G(\B)$ and $g\in G$. Let $\C_g$ be the abelian subcategory of $\C$ generated by $g\otimes \1,g\otimes X_1,\cdots,g\otimes X_n$, where $\1,X_1,\cdots,X_n$ are all nonisomorphic simple objects of $\D$. Then $\C$ admits a $G$-grading $\oplus_{g\in G}\C_g$, where the trivial component $\C_e$ is equivalent to $\D$. In fact, we can check this grading is well defined as follows. If $g\otimes X_i\in \C_g$ then $(g\otimes X_i)^*=X_i^*\otimes g^{-1}=g^{-1}\otimes X_i^*$ lies in $\C_{g^{-1}}$. If $g\otimes X_i\in \C_g$ and $h\otimes X_j\in \C_h$ then $(g\otimes X_i)\otimes(h\otimes X_j)=g\otimes (X_i\otimes h)\otimes X_j=g\otimes (h\otimes X_i)\otimes X_j=(g\otimes h)\otimes (X_i\otimes X_j)$ lies in $\C_{gh}$.
%\end{proof}

Let $\C=\oplus_{g\in G}\C_g$ be an extension of a fusion category $\D$. If $\delta_g\in \C_g,\delta_h\in \C_h$ are invertible simple objects then $\delta_g\otimes \delta_h\in \C_{gh}$ is also an invertible simple object. We shall write $\delta_{gh}=\delta_g\otimes \delta_h$ in the following corollary.
\begin{corollary}\label{exact2}
Let $\C=\oplus_{g\in G}\C_g$ be a slightly trivial extension of a fusion category $\D$. Suppose that the Grothendieck ring of $\C$ is commutative. Then the fusion rules of $\C$ are determined by that of $\D$.
\end{corollary}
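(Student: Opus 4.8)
The plan is to compute the product of two arbitrary simple objects of $\C$ and read off its decomposition from the fusion rules of $\D$. By Remark \ref{rmk1}(1), every simple object of $\C$ has the form $\delta_g\otimes X_i$ with $\delta_g\in\Delta_G$ and $X_i\in\Irr(\D)$, where $\Irr(\D)=\{\1,X_1,\dots,X_n\}$. Writing $X_i\otimes X_j=\oplus_k N_{ij}^k X_k$ for the fusion rules of $\D$, I would aim to establish the identity
\begin{equation}\label{eq-plan}
(\delta_g\otimes X_i)\otimes(\delta_h\otimes X_j)\cong\bigoplus_k N_{ij}^k\,(\delta_{gh}\otimes X_k),
\end{equation}
where $\delta_{gh}=\delta_g\otimes\delta_h$. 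Once \eqref{eq-plan} is in place the conclusion is immediate: the multiplicities occurring in any product of simples of $\C$ are exactly the structure constants $N_{ij}^k$ of $\D$, so the fusion rules of $\C$ are determined by those of $\D$.

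The decisive step is to commute the two middle factors. For a simple $X_i\in\D$ and the invertible $\delta_h$, the object $X_i\otimes\delta_h$ is again simple and lies in $\C_e\C_h\subseteq\C_h$; since $\Irr(\C_h)=\{\delta_h\otimes X_k\mid 0\le k\le n\}$ by Proposition \ref{invertibles}, tensoring on the left by $\delta_h$ being a bijection $\Irr(\D)\to\Irr(\C_h)$, we may write $X_i\otimes\delta_h\cong\delta_h\otimes X_{\sigma(i)}$ for some permutation $\sigma$ of $\{0,\dots,n\}$. The hypothesis that the Grothendieck ring is commutative gives $[X_i]\,[\delta_h]=[\delta_h]\,[X_i]$, that is $X_i\otimes\delta_h\cong\delta_h\otimes X_i$; comparing this with $X_i\otimes\delta_h\cong\delta_h\otimes X_{\sigma(i)}$ and cancelling the invertible $\delta_h$ forces $\sigma(i)=i$. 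Using this commutation together with associativity,
\begin{equation*}
(\delta_g\otimes X_i)\otimes(\delta_h\otimes X_j)\cong\delta_g\otimes\delta_h\otimes X_i\otimes X_j=\delta_{gh}\otimes(X_i\otimes X_j),
\end{equation*}
and substituting $X_i\otimes X_j=\oplus_k N_{ij}^k X_k$ yields \eqref{eq-plan}.

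It then remains to check that the right-hand side of \eqref{eq-plan} consists of genuine, pairwise non-isomorphic simple objects of $\C$. Since $\delta_{gh}=\delta_g\otimes\delta_h$ is an invertible simple object of $\C_{gh}$, Proposition \ref{invertibles} shows that $\C_{gh}$ is similar to $\D$ with $\Irr(\C_{gh})=\{\delta_{gh}\otimes X_k\mid 0\le k\le n\}$, so each $\delta_{gh}\otimes X_k$ is simple and the list is non-redundant. The one point requiring care is the commutation identity $X_i\otimes\delta_h\cong\delta_h\otimes X_i$: this is precisely where commutativity of the Grothendieck ring enters, and I expect it to be the main obstacle. Without it one would instead retain the permutation $\sigma$ induced by conjugation by $\delta_h$, and the fusion rules of $\C$ would depend on this extra datum rather than on those of $\D$ alone; the rest of the argument is routine bookkeeping.
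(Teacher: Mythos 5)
Your proposal is correct and follows essentially the same route as the paper: both reduce the product $(\delta_g\otimes X_i)\otimes(\delta_h\otimes X_j)$ to $\delta_{gh}\otimes(X_i\otimes X_j)$ via associativity and the commutativity hypothesis, then expand using the fusion rules of $\D$. If anything, you are more careful than the paper, which silently commutes $X_i$ past $\delta_h$ in its first displayed equality, whereas you make explicit that this step is exactly where commutativity of the Grothendieck ring (together with semisimplicity, so equal classes give isomorphic objects) is used.
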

\begin{proof}
Let $\Irr(\D)=\{\1=X_0,X_1,\cdots,X_n\}$. We assume the fusion rules of $\D$ are determined by the following equation:
\begin{equation}
\begin{split}
X_i\otimes X_j=\bigoplus_{t=0}^{n}N_{ij}^tX_t.
\end{split}\nonumber
\end{equation}

Let $\delta_g\in \C_g,\delta_h\in \C_h$ are invertible simple objects. Then $\delta_g,\delta_g\otimes X_1,\cdots,\delta_g\otimes X_n$ and $\delta_h,\delta_h\otimes X_1,\cdots,\delta_h\otimes X_n$ are all nonisomorphic simple objects in $\C_g$ and $\C_h$, respectively. Then
\begin{equation}
\begin{split}
(\delta_g\otimes X_i)\otimes (\delta_h\otimes X_j)&=(\delta_g\otimes \delta_h)\otimes (X_i\otimes X_j)\\
                  &=\delta_{gh}\otimes (X_i\otimes X_j)\\
                  &=\delta_{gh}\otimes\bigoplus_{t=0}^{n}N_{ij}^tX_t\\
                  &=\bigoplus_{t=0}^{n}N_{ij}^t\delta_{gh}\otimes X_t.
\end{split}\nonumber
\end{equation}
\end{proof}

\section{Examples of slightly trivial extensions}
In this section, we shall need the following lemma.
\begin{lemma}\cite[Proposition 8.20]{etingof2005fusion}\label{FPdim}
Let $\C=\oplus_{g\in G}\C_g$ be a $G$-extension of a fusion category $\D$. Then $\FPdim(\C_g)=\FPdim(\C_h)$ for all $g,h\in G$ and $\FPdim(\C)=|G|\FPdim(\D)$ .
\end{lemma}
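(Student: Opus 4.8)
The plan is to deduce both assertions at once from the regular element of the Grothendieck ring together with the compatibility of $\FPdim$ with the grading. Recall from \cite{etingof2005fusion} that $\FPdim$ extends to a ring homomorphism $\mathrm{Gr}(\C)\otimes_{\Zz}\mathbb{R}\to\mathbb{R}$, so it is both additive and multiplicative, and that the regular element
\[
R=\sum_{X\in\Irr(\C)}\FPdim(X)\,[X]
\]
satisfies $[Z]\cdot R=\FPdim(Z)\,R$ for every object $Z$ of $\C$, with $\FPdim(R)=\sum_{X\in\Irr(\C)}\FPdim(X)^2=\FPdim(\C)$.

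First I would split $R$ along the grading as $R=\sum_{g\in G}R_g$, where $R_g=\sum_{X\in\Irr(\C_g)}\FPdim(X)\,[X]$ is supported in the component $\C_g$; by construction $\FPdim(R_g)=\FPdim(\C_g)$.

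Next, fix $g\in G$. Since the grading is faithful, $\C_g$ contains a nonzero object $Y$, and $\FPdim(Y)\geq 1>0$ by Lemma \ref{dim}. Because $[Y]\in\C_g$ and $R_h\in\C_h$, the product $[Y]\cdot R_h$ lies in $\C_{gh}$; comparing the homogeneous components of degree $gh$ on the two sides of the identity $[Y]\cdot R=\FPdim(Y)\,R$ gives
\[
[Y]\cdot R_h=\FPdim(Y)\,R_{gh}\qquad\text{for every }h\in G.
\]
Applying the ring homomorphism $\FPdim$ and using $\FPdim(Y)\neq 0$, I would cancel $\FPdim(Y)$ to obtain $\FPdim(\C_h)=\FPdim(\C_{gh})$ for all $h\in G$. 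As $g$ ranges over $G$ this shows that $\FPdim(\C_g)$ is independent of $g$, which is the first assertion; taking $h=e$ in particular yields $\FPdim(\C_g)=\FPdim(\C_e)=\FPdim(\D)$.

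Finally, the second assertion follows by summing over the grading, $\FPdim(\C)=\sum_{g\in G}\FPdim(\C_g)=|G|\,\FPdim(\D)$. The only genuine work lies in the third step: recording that $R$ is a common eigenvector for every left-multiplication operator and that tensoring respects the grading, after which everything reduces to cancelling the nonzero scalar $\FPdim(Y)$. I expect no serious obstacle beyond this bookkeeping.
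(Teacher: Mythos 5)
Your argument is correct: splitting the regular element $R$ along the grading, using the eigenvector property $[Y]\cdot R=\FPdim(Y)\,R$ together with $\C_g\otimes\C_h\subseteq\C_{gh}$, and then cancelling the nonzero scalar $\FPdim(Y)$ (available for every $g$ since the grading is faithful) gives both assertions exactly as you say. The paper offers no proof of its own---the lemma is quoted from \cite[Proposition 8.20]{etingof2005fusion}---and your regular-element argument is essentially the standard proof given in that reference, so your approach matches the source.
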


For a fusion category $\C$, we shall denote by $\cd(\C)$ the set of Frobenius-Perron dimensions of simple objects of $\C$.

\begin{lemma}\label{low-simps}
Let $\D$ be a fusion category and  $\C=\oplus_{g\in G}\C_g$ be an extension of $\D$. Assume that $\D$ contains a simple object $X$ such that $1<\FPdim(X)<2$. Then, for every $g\in G$, component $\C_g$ has one of the following properties.

(1)\, $\C_g$ contains an invertible simple object.

(2)\, There exists $Y\in \Irr(\C_g)$ such that $X\otimes Y$ is a simple object, where $\FPdim(Y)$ is the smallest one in $cd(\C)$.
\end{lemma}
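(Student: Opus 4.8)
The plan is to establish both alternatives simultaneously by analyzing a single well-chosen simple object of $\C_g$ and exploiting the bound $\FPdim(X)<2$ supplied by Lemma \ref{dim}. First I would fix $g\in G$ and choose $Y\in \Irr(\C_g)$ to be a simple object of smallest Frobenius--Perron dimension occurring in $\C_g$ (such a $Y$ exists because the grading is faithful, so $\C_g\neq 0$, and $\C_g$ has finite rank), so that $\FPdim(Y)\le \FPdim(Z)$ for every $Z\in \Irr(\C_g)$. Since $X\in \D=\C_e$ and $Y\in \C_g$, the grading axiom $\C_e\otimes \C_g\subseteq \C_g$ gives $X\otimes Y\in \C_g$, and $X\otimes Y\neq 0$ because Frobenius--Perron dimension is multiplicative and strictly positive. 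I may therefore write its decomposition into simple objects of $\C_g$,
\[
X\otimes Y=\bigoplus_{i} m_i Z_i,\qquad Z_i\in \Irr(\C_g),\ m_i\ge 1.
\]

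Next I would compare Frobenius--Perron dimensions. Multiplicativity gives $\FPdim(X)\FPdim(Y)=\sum_i m_i\FPdim(Z_i)$, while minimality of $\FPdim(Y)$ yields $\FPdim(Z_i)\ge \FPdim(Y)$ for every $i$. Hence
\[
\FPdim(X)\FPdim(Y)=\sum_i m_i\FPdim(Z_i)\ge \Big(\sum_i m_i\Big)\FPdim(Y),
\]
and dividing by $\FPdim(Y)>0$ gives $\FPdim(X)\ge \sum_i m_i$. Since $\sum_i m_i$ is a positive integer and $\FPdim(X)<2$ by Lemma \ref{dim}, the only possibility is $\sum_i m_i=1$; that is, $X\otimes Y$ is a simple object of $\C_g$.

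Finally I would split according to the value of $\FPdim(Y)$. If $\FPdim(Y)=1$ then $Y$ is invertible and $\C_g$ satisfies property (1). Otherwise $\FPdim(Y)>1$, and $Y$ is a simple object of $\C_g$ realizing the smallest dimension in $\cd(\C_g)$ for which $X\otimes Y$ is simple, which is property (2). I expect the only delicate points to be bookkeeping rather than conceptual: one must select $Y$ of \emph{minimal} dimension in the component (not merely an arbitrary simple object) so that the inequality $\FPdim(Z_i)\ge \FPdim(Y)$ is available, and one must verify that $X\otimes Y$ indeed lands in $\C_g$ and is nonzero before decomposing it. The whole argument is driven by the strict inequality $\FPdim(X)<2$, which forces the tensor product of $X$ with a dimension-minimal object of $\C_g$ to remain simple.
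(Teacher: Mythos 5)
Your proof is correct, and it shares the paper's basic strategy: tensor $X$ with a dimension-minimal simple object of $\C_g$ and exploit $\FPdim(X)<2$ to force the product to be simple. But the way you force simplicity is genuinely different and cleaner. The paper, after assuming (1) fails, orders $\Irr(\C_g)=\{X_1^g,\dots,X_n^g\}$ by dimension, writes $X\otimes X_1^g=\oplus_i N^iX_i^g$, and runs a three-stage case analysis: first excluding $N^1=1$, then showing each $N^i$ is $0$ or $1$, then taking the smallest $k$ with $N^k=1$ and showing all later multiplicities vanish, so $X\otimes X_1^g=X_k^g$. You collapse all of this into a single multiplicity count: minimality of $\FPdim(Y)$ gives $\FPdim(X)\FPdim(Y)\geq\bigl(\sum_i m_i\bigr)\FPdim(Y)$, hence $\sum_i m_i\leq\FPdim(X)<2$, so the decomposition has exactly one simple summand with multiplicity one. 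This buys two things: it eliminates the bookkeeping with the ordering of $\Irr(\C_g)$, and it shows the statement's dichotomy is not really a dichotomy --- alternative (2) holds unconditionally, with no need to assume (1) fails (the paper's contrapositive framing, and the bound $\FPdim\geq\sqrt{2}$ it quotes at the outset, play no essential role in its own argument either). Your reading of the statement's ``smallest one in $\cd(\C)$'' as smallest in $\cd(\C_g)$ matches what the paper's proof actually does, so that discrepancy is a typo in the statement rather than a gap on your side.
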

\begin{proof}
Assume that part (1) does not hold true and prove (2). In this case, $\FPdim(Y)\geq \sqrt{2}$ for every simple object $Y$ in $\C_g$, by Lemma \ref{dim}. Assume that $\Irr(\C_g)=\{X_1^g,\cdots,X_n^g\}$. We may reorder them such that $\FPdim(X_1^g)\leq\cdots\leq \FPdim(X_n^g)$. Since $X\otimes X_1^g\in \C_g$, we may set $X\otimes X_1^g=\oplus_{i=1}^nN^iX_i^g$. Since $\FPdim(X_1^g)<\FPdim(X\otimes X_1^g)<2\FPdim(X_1^g)$, we have $N^1=1$ or $0$. If $N^1=1$ then
$$ \FPdim(\oplus_{i=2}^nN^iX_i^g)=\FPdim(X\otimes X_1^g)-\FPdim(X_1^g)<\FPdim(X_1^g).$$

This is impossible since $\FPdim(X_i^g)\geq \FPdim(X_1^g)$ for all $i\geq 2$. Therefore, we have $N^1=0$ and $X\otimes X_1^g=\oplus_{i=2}^nN^iX_i^g$.

Since $\FPdim(X_i^g)\leq \FPdim(X\otimes X_1^g)<2\FPdim(X_i^g)$, for all $i\geq 2$, we have $N^i=1$ or $0$. Let $k$ be the smallest number such that $N^k=1$. Then
$$\FPdim(X\otimes X_1^g)-\FPdim(X_k^g)=\FPdim(\oplus_{i>k}^nN^iX_i^g)<\FPdim(X_k^g).$$

By the minimality of $k$, we get $N^i=0$ for all $i>k$.  Hence we have $X\otimes X_1^g=X_k^g$ is simple.
\end{proof}

Let $\D$ be a rank $3$ fusion category with $\Irr(\D)=\{\1,\alpha,\beta\}$. The fusion rules of $\D$ are determined by
\begin{equation}\label{A(1,5)}
\begin{split}
\alpha\otimes \alpha=\1 \oplus \beta,\quad \beta\otimes \beta=\1\oplus \alpha\oplus \beta,\quad \alpha\otimes \beta=\beta\otimes \alpha=\alpha\oplus \beta.
\end{split}
\end{equation}

It is not hard to check that $\FPdim(\alpha)=2\cos(\frac{\pi}{7})$  and $\FPdim(\beta)=4\cos(\frac{\pi}{7})^2-1$. It is proved  in \cite{raey} that $\D$ is a modular category.

\begin{theorem}\label{exten_A(1,5)}
Let $\D$ be a fusion category with the fusion rules (\ref{A(1,5)}) and let $\C=\oplus_{g\in G}\C_g$ be an extension of $\D$. Then

(1)\, $\C$ is a slightly trivial extension of $\D$.

(2)\, $\C=\C_{pt}\bullet\D$ is an exact factorization of $\C_{pt}$ and $\D$.
\end{theorem}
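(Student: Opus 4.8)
The plan is to reduce both parts to the general results already established. First I would record that $\D_{pt}$ is trivial: since $\FPdim(\alpha)=2\cos(\frac{\pi}{7})>1$ and $\FPdim(\beta)=4\cos(\frac{\pi}{7})^2-1>1$, the object $\1$ is the only invertible simple object of $\D$. Granting part (1), part (2) is then immediate from Corollary \ref{exact1}. So the real work is part (1), for which, by Proposition \ref{invertibles}, it suffices to show that every component $\C_g$ contains an invertible simple object.

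To prove this I would fix $g$ and apply Lemma \ref{low-simps} with $X=\alpha$, which is legitimate because $1<\FPdim(\alpha)=2\cos(\frac{\pi}{7})<2$. This yields the dichotomy: either $\C_g$ already contains an invertible simple object (which is what we want), or there is a simple object $Y\in\Irr(\C_g)$ of smallest Frobenius-Perron dimension, with $\FPdim(Y)\geq\sqrt{2}$, such that $Z:=\alpha\otimes Y$ is simple. The goal is to rule out the second alternative.

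The key step is a dimension count. Since $\FPdim(Z)=2\cos(\frac{\pi}{7})\FPdim(Y)>\FPdim(Y)$, the objects $Y$ and $Z$ are distinct simple objects of $\C_g$, so writing $d=\FPdim(Y)$ we obtain $d^2(1+\FPdim(\alpha)^2)\leq\FPdim(\C_g)$. By Lemma \ref{FPdim} we have $\FPdim(\C_g)=\FPdim(\D)=1+\FPdim(\alpha)^2+\FPdim(\beta)^2$, and using $\FPdim(\alpha)^2=1+\FPdim(\beta)$ together with $\FPdim(\beta)^2=1+\FPdim(\alpha)+\FPdim(\beta)$ one checks that this inequality forces $d^2\leq 1+\FPdim(\beta)^2/(1+\FPdim(\alpha)^2)<(2\cos(\frac{\pi}{5}))^2$. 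Combined with $d\geq\sqrt{2}$ and Lemma \ref{dim}, the only admissible value is $d=\sqrt{2}$.

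Finally I would extract a contradiction from $d=\sqrt{2}$. The object $Y^*$ lies in $\C_{g^{-1}}$, so $Y\otimes Y^*\in\C_g\otimes\C_{g^{-1}}\subseteq\C_e=\D$. Its Frobenius-Perron dimension is $d^2=2$, and since $Y$ is simple the unit $\1$ occurs in $Y\otimes Y^*$ with multiplicity one; the remaining summand then has Frobenius-Perron dimension $1$, i.e. is a nontrivial invertible object of $\D$. This contradicts the triviality of $\D_{pt}$. Hence the second alternative of Lemma \ref{low-simps} cannot occur, every $\C_g$ contains an invertible object, and part (1) follows; part (2) is then immediate from Corollary \ref{exact1}. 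I expect the main obstacle to be the dimension estimate pinning $d$ down to exactly $\sqrt{2}$, after which the grading constraint $Y\otimes Y^*\in\D$ does the rest.
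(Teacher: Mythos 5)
Your proof is correct, but it takes a genuinely different route from the paper's. After the common reduction (Proposition \ref{invertibles} plus the dichotomy of Lemma \ref{low-simps} applied to $\alpha$), the paper proceeds by pinning down the structure of the offending component exactly: it rules out $\rk(\C_g)\geq 3$ by a dimension count, so that $\Irr(\C_g)=\{X,\alpha\otimes X\}$, then invokes the external result \cite[Lemma 4.10]{natale2012solvability} (since $\alpha\otimes X$ is simple, $\alpha\otimes\alpha=\1\oplus\beta$ and $X\otimes X^*$ share no nontrivial summands) to force $X\otimes X^*=\1\oplus\alpha$, and derives a contradiction because the resulting $\FPdim(\C_g)=1+2\cos(\frac{\pi}{7})+4\cos(\frac{\pi}{7})^2+8\cos(\frac{\pi}{7})^3$ differs from $\FPdim(\D)$. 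You avoid both the exact rank determination and the Natale--Plavnik lemma: from the two distinct simples $Y$ and $\alpha\otimes Y$ you get $d^2(1+\FPdim(\alpha)^2)\leq\FPdim(\D)$, i.e. $d^2\leq 2.19$ (your verification of this bound via the fusion-rule identities is sound), and you then exploit the quantization in Lemma \ref{dim} --- dimensions below $2$ lie in the discrete set $\{2\cos(\frac{\pi}{n})\}$ --- to conclude $d=\sqrt{2}$; finally the grading constraint $Y\otimes Y^*\in\C_e=\D$ together with $\FPdim(Y\otimes Y^*)=2$ and multiplicity one of $\1$ produces a nontrivial invertible object of $\D$, contradicting the triviality of $\D_{pt}$ which you checked at the outset. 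Your argument is more self-contained (it cites nothing outside the paper's own lemmas) and uses Lemma \ref{dim} more sharply; the trade-off is that your final step leans essentially on $\D_{pt}$ being trivial, whereas the paper's mechanism is the one that survives in the next theorem on Ising categories, where $\D_{pt}$ is nontrivial, the decomposition $X_g\otimes X_g^*=\1\oplus\delta$ is perfectly consistent with the grading, and only the Natale--Plavnik lemma rules it out.
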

\begin{proof}
(1)\, By Theorem \ref{invertibles}, it suffices to prove that every component contains one invertible simple object.

Suppose on the contrary that there exists a component $\C_g$ such that every simple object in $\C_g$ is not invertible. Let $X\in \C_g$ be a simple object such that $\FPdim(X)$ is smallest in $cd(\C_g)$. Then Lemma \ref{low-simps} shows that $\alpha\otimes X$ is a simple object in $\C_g$. That is, $\C_g$ contains at least $2$ simple objects. By Lemma \ref{dim}, $\FPdim(X)\geq\sqrt{2}$ and hence $\FPdim(\alpha\otimes X)\geq2\sqrt{2}\cos(\frac{\pi}{7})$.

If $\rk(\C_g)\geq3$ then $\FPdim(\C_g)\geq 4+8\cos(\frac{\pi}{7})^2>\FPdim(\D)$. This is impossible by Lemma \ref{FPdim}. Hence $\C_g$ only contains $2$ simple objects.

Let $\Irr(\C_g)=\{X,Y\}$ with $\FPdim(X)\leq \FPdim(Y)$. By Lemma \ref{low-simps}, $Y=\alpha\otimes X$. By \cite[Lemma 4.10]{natale2012solvability}, the decompositions of $\alpha\otimes \alpha$ and $X\otimes X^*$ can not have common summands. This implies that $X\otimes X^*=\1\oplus \alpha$ and hence $\FPdim(X)=\sqrt{1+2\cos(\frac{\pi}{7})}$. So we have $\FPdim(\C_g)=\FPdim(X)^2+\FPdim(Y)^2=1+2\cos(\frac{\pi}{7})+4\cos(\frac{\pi}{7})^2+8\cos(\frac{\pi}{7})^3$, which is not equal to $\FPdim(\D)$. It is also impossible by Lemma \ref{FPdim}. This proves part (1).

(2)\, Part (2) follows from Part (1) and Corollary \ref{exact1}, since $\D_{pt}$ is trivial.
\end{proof}

\medbreak
An Ising category $\I$ is a fusion category which is not pointed and has Frobenius-Perron dimension $4$. Let $\I$ be an Ising category then $\Irr(\I)$ consists of three simple objects: $\1$, $\delta$ and $X$, where $\1$ is the trivial simple object, $\delta$ is an invertible object and $X$ is a non-invertible object. They obey the following fusion rules:
$$\delta\otimes \delta\cong \1,\delta\otimes X\cong X\otimes \delta \cong X,X\otimes X\cong \1\oplus\delta.$$

It is easy to check that $\FPdim(\delta)=1$, $\FPdim(X)=\sqrt{2}$ and $X$ is self-dual. It is known that any Ising category $\I$ is a modular category. See \cite[Appendix B]{drinfeld2010braided} for more details on Ising categories.

\begin{theorem}
Let $\C=\oplus_{g\in G}\C_g$ be an extension of an Ising category $\I$. Then $\C$ is a slightly trivial extension of $\I$.
\end{theorem}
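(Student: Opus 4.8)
The plan is to reduce, via Proposition \ref{invertibles}, to showing that every graded component $\C_g$ contains an invertible simple object; once this is established, $\C$ is automatically a slightly trivial extension of $\I$. The key numerical input is Lemma \ref{FPdim}, which gives $\FPdim(\C_g)=\FPdim(\I)=1+1+2=4$ for every $g\in G$.

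To show each component has an invertible object, I would argue by contradiction exactly as in the proof of Theorem \ref{exten_A(1,5)}. Suppose some $\C_g$ contains no invertible simple object. The non-invertible object $X$ of $\I$ satisfies $1<\FPdim(X)=\sqrt{2}<2$, so Lemma \ref{low-simps} applies to the extension $\C$ with this choice of $X$. Since part (1) of that lemma fails for $\C_g$ by assumption, part (2) must hold: there is a simple object $Y\in \C_g$, of smallest Frobenius--Perron dimension, such that $X\otimes Y$ is again simple. Because $\C_g$ has no invertibles, Lemma \ref{dim} forces $\FPdim(Y)\geq\sqrt{2}$.

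It then remains to derive a contradiction from the dimension count. Since $\FPdim(X\otimes Y)=\sqrt{2}\,\FPdim(Y)>\FPdim(Y)$, the objects $Y$ and $X\otimes Y$ are distinct simple objects of $\C_g$, so
\[
\FPdim(\C_g)\geq \FPdim(Y)^2+\FPdim(X\otimes Y)^2=3\,\FPdim(Y)^2\geq 6>4,
\]
contradicting $\FPdim(\C_g)=4$. Hence every component contains an invertible simple object, and the statement follows from Proposition \ref{invertibles}.

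I do not expect a serious obstacle here. Unlike the proof of Theorem \ref{exten_A(1,5)}, where the case $\rk(\C_g)=2$ had to be excluded by the finer argument using \cite[Lemma 4.10]{natale2012solvability} on common summands of $X\otimes X^*$, in the Ising case the crude bound $3\,\FPdim(Y)^2\geq 3\cdot 2=6$ already exceeds $\FPdim(\I)=4$ as soon as two distinct simples are forced to appear, so no analysis of the fusion of $X\otimes X^*$ is needed. The only point to verify carefully is that $Y$ and $X\otimes Y$ really are non-isomorphic, which is immediate from $\FPdim(X)=\sqrt{2}>1$.
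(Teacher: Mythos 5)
Your proof is correct, and it shares the paper's overall skeleton --- reduction via Proposition \ref{invertibles}, contradiction hypothesis on a component $\C_g$, Lemma \ref{low-simps} applied with the Ising simple $X$ of dimension $\sqrt{2}$, the bound $\FPdim(Y)\geq \sqrt{2}$ from Lemma \ref{dim}, and the comparison with $\FPdim(\C_g)=\FPdim(\I)=4$ from Lemma \ref{FPdim} --- but it closes the argument differently, and more cleanly. The paper derives the very same estimate $\FPdim(\C_g)\geq \FPdim(X_g)^2+\FPdim(X\otimes X_g)^2\geq 6$, yet instead of stopping at the immediate contradiction $6>4$, it goes on to assert that $\C_g$ consists of exactly the two simples $X_g$ and $X\otimes X_g$ with $\FPdim(X_g)=\sqrt{2}$, and then kills this configuration by noting that $X_g\otimes X_g^*=\1\oplus \delta$ would coincide with $X\otimes X^*$, contradicting \cite[Lemma 4.10]{natale2012solvability}. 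That final step is redundant given the $\geq 6$ bound (indeed, the rank-$2$ configuration it excludes would have $\FPdim(\C_g)=6\neq 4$ anyway), so your version --- pure dimension counting, with no appeal to the common-summand lemma --- is the tighter argument. Your closing remark is also exactly the right diagnosis of why the Ising case is easier than the rank-$3$ category of Theorem \ref{exten_A(1,5)}: there the crude count only rules out $\rk(\C_g)\geq 3$, leaving a rank-$2$ case that genuinely needs \cite[Lemma 4.10]{natale2012solvability}, whereas here $3\FPdim(Y)^2\geq 6$ already exceeds $4$. The one point you flagged as needing care, namely $Y\not\cong X\otimes Y$, is indeed settled by $\FPdim(X\otimes Y)=\sqrt{2}\,\FPdim(Y)\neq \FPdim(Y)$.
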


\begin{proof}
As before, we set $\Irr(\I)=\{\1, \delta,X\}$. By Proposition \ref{invertibles}, it suffices to prove that every component $\C_g$ contains at least one invertible object.

Suppose on the contrary that every simple object in $\C_g$ is non-invertible. Let $X_g$ be a simple object in $\C_g$ such that $\FPdim(X_g)$ is smallest in $cd(\C_g)$. Then $X \otimes X_g$ is simple by Lemma \ref{low-simps}. By Lemma \ref{dim}, $\FPdim(X_g)\geq \sqrt{2}$. Hence $\FPdim(\C_g)\geq \FPdim(X_g)^2+\FPdim(X \otimes X_g)^2\geq 6$. Since $\FPdim(\C_g)=\FPdim(\I)$, we conclude that $X_g,X \otimes X_g$ are exactly the only two simple objects in $\C_g$ and $\FPdim(X_g)=\sqrt{2}$. Since $X_g\otimes X_g^*\in \I$, the only possible decomposition of $X_g\otimes X_g^*$ is $\1\oplus \delta$, which coincides with $X\otimes X^*$. This is impossible by \cite[Lemma 4.10]{natale2012solvability}. This proves that every component $\C_g$ contains at least one invertible object.

%By Lemma \ref{invertibles}, every component $\C_g$ contains two invertible simple objects $\alpha_g$, $\beta_g$ and one $\sqrt{2}$-dimensional simple object $X_g$. It is clear that $\beta_g=\alpha\otimes \alpha_g$ and $X_g=X\otimes \alpha_g$. We also have $\alpha_g=\alpha\otimes \beta_g$ and $X_g=X\otimes \beta_g$.
\end{proof}

\section*{Acknowledgements}
The author is partially supported by the startup foundation for introducing talent of NUIST (Grant No. 2018R039) and the Natural Science Foundation of China (Grant No. 11201231)
% Used on Computer
%\bibliographystyle{C:/Users/Dong/OneDrive/Documents/references/bibstyles/elsart-num-sort}
%\bibliography{C:/Users/Dong/OneDrive/Documents/references/dongrefs}

% Used on Laptop
%\bibliographystyle{C:/Users/thinkpad/SkyDrive/Documents/references/bibstyles/elsart-num-sort}
%\bibliography{C:/Users/thinkpad/SkyDrive/Documents/references/dongrefs}

\end{document}